\newtheorem{definition}{Definition}
\newtheorem{theorem}{Theorem}[section]
\newtheorem{lemma}{Lemma}[section]
\begin{document}
\baselineskip 15pt

\title[Orbital stability of smooth solitary waves]
{Orbital stability of smooth solitary waves for the $b$-family of Camassa-Holm equations}

\author[T. Long, C. Liu,]{Teng Long, Changjian Liu$^*$}

\address{Teng Long\newline
School of Mathematics (Zhuhai), Sun Yat-sen University. Zhuhai 519082, China}
\email{longt28@mail2.sysu.edu.cn}

\address{Changjian Liu\newline
School of Mathematics (Zhuhai), Sun Yat-sen University. Zhuhai 519082, China}
\email{liuchangj@mail.sysu.edu.cn}

\date{}

\keywords{Camassa-Holm equation; Smooth solitary waves; Stability; Period function; Hamiltonian systems.}
\subjclass[2010]{35Q35; 37G15; 37K40}

\begin{abstract}
In this paper, we study the stability of smooth solitary waves for the $b$-family of Camassa-Holm equations. We verify the stability criterion analytically for the general case $b>1$ by the idea of the monotonicity of the period function for planar Hamiltonian systems and show that the smooth solitary waves are orbitally stable, which gives a positive answer to the open problem proposed by Lafortune and Pelinovsky [S. Lafortune, D. E. Pelinovsky, Stability of smooth solitary waves in the $b$-Camassa-Holm equation].
\end{abstract}
\maketitle

\section{Introduction}

The $b$-family of Camassa-Holm equations (namely $b$-CH equation)
\begin{equation}\label{CH}
u_t-u_{txx}+(b+1)uu_x=bu_xu_{xx}+uu_{xxx},
\end{equation}
was mentioned by Degasperis and Dullin et al. in \cite{Degasperis2,Dullin} by using transformations of the integrable hierarchy of KdV equations, where $u=u(t,x)$ is the scalar velocity variable and $b$ is arbitrary parameter. The Camassa-Holm equation is used to describe the unidirectional propagation of water waves on a free surface in shallow water.

Degasperis and Procesi \cite{Degasperis1} proved that the $b$-CH equation \eqref{CH} is not integrable in general, but includes both the integrable cases of $b=2$ called the Camassa-Holm equation \cite{Camassa,Liu2} and $b=3$ called the Degasperis-Procesi equation \cite{Degasperis1,Lundmark} as special cases. Furthermore, Camassa-Holm equation and Degasperis-Procesi equation for important modelling of shallow water waves with breaking phenomena were studied in \cite{Constantin1,Constantin2,Liu,Whitham}.

%It is of great theoretical and practical significance to study the traveling waves, because it can well describe various physical phenomena such as vibration and propagating waves.
Travelling waves are usually divided into the following four categories: (i) stationary wave solution, (ii) travelling wavefront, (iii) soliton, (iv) periodic wave solution.
Moreover, peaked and smooth solitary waves exist in the $b$-CH equation \eqref{CH} and depend on parameter $b$ and parameter $k$ ($k$ is related to the critical shallow water speed), and both belong to solitary waves of (iii) soliton.

Many methods are available for solving the travelling waves. Using the qualitative theory of differential equations and the bifurcation method of dynamical systems to study traveling waves was first put forward by Liu and Li in \cite{Liu1} and they have shown that the solitary waves correspond to homoclinic orbits in the bifurcation phase diagrams of planar Hamiltonian systems.

In \cite{Guo}, the bifurcation method of dynamical systems and the numerical simulation approach of differential
equations are used to investigate traveling waves of the $b$-CH equation \eqref{CH}. Lately, the travelling waves of the $b$-CH equation \eqref{CH} was learned by Barnes and Hone in \cite{Barnes} by using hodograph transformation.

Regarding the stability theory of solitary waves, Grillakis, Shatah and Strauss showed an abstract and complete solitary wave orbit stability theory and found sharp conditions for the stability and instability of solitary waves in \cite{Grillakis}. Many results have been obtained for orbital stability of solitary waves for the $b$-CH equation \eqref{CH} with this method.

For zero asymptotic value ($k=0$), the initial data were decomposed into a sequence of peaked solitary waves called \textit{peakons} for $b>1$ and a sequence of smooth solitary waves called \textit{leftons} for $b<-1$ by numerical simulations in \cite{Holm1,Holm2}. For $b\in(-1,1)$, a rarefactive wave with exponentially decaying tails is generated from the initial data. The orbital stability of \textit{leftons} for $b<-1$ in some exponentially weighted space, \textit{peakons} for $b=2$ in the energy space $H^1(\mathbb{R})$ and $b=3$ in the energy space $L^2(\mathbb{R})\cap L^3(\mathbb{R})$ is studied in \cite{Hone}, \cite{Constantin3,Constantin4} and \cite{Lin}, respectively. %Recently, Lafortune and Pelinovsky \cite{Lafortune1} showed spectral and linear instability of peakons of the b-family of Camassa-Holm equations for every $b$ in $L^2(\mathbb{R})$.

For nonzero asymptotic value ($k\neq0$), Constantin, Strauss \cite{Constantin5} and Li, Liu, Wu \cite{Li} proved that the smooth solitary waves for $b=2$ and $b=3$ are orbitally stable by using the conserved energy integrals in the energy space, respectively. In addition, Liu et. al in \cite{Liu2} showed that for $b=2$ the Camassa-Holm equation has a peakon solution and orbital stability of the peakon solution was discussed in \cite{Ouyang}. Recently, Lafortune and Pelinovsky \cite{Lafortune} deduced a precise condition for orbital stability of the smooth solitory waves for the $b$-CH equation \eqref{CH} and verified the stability criterion analytically for $b=2$ and $b=3$ and numerically for every $b>1$. They said that it is still open to verify the stability criterion analytically for every $b>1$, $c>0$, and $k\in(0, \frac{c}{b+1})$, where $c$ is a constant wave speed.

Motivated by Lafortune and Pelinovsky \cite{Lafortune}, the main purpose of this article is to consider orbital stability of the smooth solitary waves of the $b$-CH equation \eqref{CH} for every $b>1$ by means of the monotonicity of the period function for planar Hamiltonian systems.

The main result of this paper is as follows:
\begin{theorem}
For every $b>1$, $c>0$ and $k\in(0, \frac{c}{b+1})$, the smooth solitary waves of the $b$-CH equation \eqref{CH} are orbitally stable.
\end{theorem}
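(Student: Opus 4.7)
The plan is to translate the orbital-stability question into a monotonicity statement for the period function of a planar Hamiltonian system associated with the travelling-wave profile, and then to verify that monotonicity analytically for the full parameter range.

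First, I would insert the travelling-wave ansatz $u(t,x)=\varphi(x-ct)$ into \eqref{CH} and integrate once, using the asymptotic conditions $\varphi\to k$, $\varphi'\to 0$, $\varphi''\to 0$ at infinity. This yields a second-order ODE that rewrites as a planar Hamiltonian system $\varphi'=y$, $y'=-V'(\varphi)$, whose potential $V(\varphi;b,c,k)$ depends on the parameters in a controlled way (after clearing denominators it is polynomial in $\varphi$ with the factor $(c-\varphi)^{b-1}$). In this phase portrait the smooth solitary wave corresponds to a homoclinic loop at a saddle, and the interior of the loop is foliated by periodic orbits parametrized by the Hamiltonian value $h\in(h_{\min},h_{\mathrm{sad}})$, with period $T(h)$.

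Second, I would invoke the Lafortune--Pelinovsky reformulation of the Grillakis--Shatah--Strauss criterion. Their condition reduces orbital stability to the sign of a scalar quantity built from integrals of conserved densities along the solitary profile, and that quantity can be rewritten through the action--angle identity $\oint_{\{H=h\}} y\,d\varphi=\int_{h_{\min}}^{h}T(s)\,ds$ as a statement about $\mathrm{d}T/\mathrm{d}h$ at the homoclinic level, up to a factor that is manifestly positive under the hypotheses $b>1$, $c>0$, $k\in(0,\tfrac{c}{b+1})$ (which guarantees $c-\varphi>0$ along the orbit and controls the sign of the relevant Jacobian). Thus stability follows once $T(h)$ is shown to be monotone on the period annulus.

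Third, I would prove monotonicity of $T(h)$ by a Chicone/Schaaf-type criterion. After a suitable change of variable normalizing the saddle and the local minimum of $V$, $T'(h)$ has a definite sign whenever an auxiliary expression such as $(V')^{2}-2(V-V(\varphi_{\min}))V''$, or the Chicone quantity $N(\varphi)=\bigl((V'/\sqrt{V-V_{\min}})'\bigr)'$, has constant sign on the relevant interval of $\varphi$. Because of the structure of $V$, this reduces, after multiplying by a positive power of $(c-\varphi)$, to a single polynomial inequality in $\varphi$ whose coefficients are explicit in $b,c,k$; by the scaling $c\mapsto 1$ and $\varphi\mapsto c\varphi$ one eliminates $c$, and the remaining inequality in $(b,\varphi,k/c)$ can be verified by separating the $b$-dependence and exploiting the constraint $k<c/(b+1)$.

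The hard part will be this last algebraic step. Even though each individual identity is mechanical, establishing a single one-sign inequality that holds uniformly for all $b>1$ and all admissible $k$ is where the work of the paper must live; the right grouping of terms (perhaps writing the polynomial as a sum of squares times $(c-(b+1)k)$, or decomposing $b>1$ into a region $1<b\le 2$ handled by direct estimates and $b\ge 2$ handled by a convexity argument in $b$) is the decisive choice. Once the sign of $T'(h)$ is pinned down, the Lafortune--Pelinovsky criterion delivers orbital stability for every $b>1$, $c>0$, $k\in(0,\tfrac{c}{b+1})$, completing the theorem.
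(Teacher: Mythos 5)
Your first step matches the paper, but your second step contains a genuine gap that the rest of the argument inherits. You claim that the Lafortune--Pelinovsky criterion ``can be rewritten through the action--angle identity $\oint y\,d\varphi=\int T(s)\,ds$ as a statement about $dT/dh$ at the homoclinic level.'' This identification is not substantiated and is almost certainly false as stated: the stability criterion is the strict monotonicity in $k$ of the specific functional $Q(\phi)=\int_{\mathbb{R}}\bigl(b\tfrac{c-k}{c-\phi}-(\tfrac{c-k}{c-\phi})^{b}-b+1\bigr)dx$, which is neither the action nor the period of the Hamiltonian system for the profile. Moreover, the period function of the periodic orbits inside a homoclinic loop diverges (logarithmically) as the orbits approach the loop, so ``$T'(h)$ at the homoclinic level'' is not a well-defined finite quantity, and monotonicity of $T$ near the separatrix is automatic and carries no information. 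Consequently your third step --- a Chicone/Schaaf criterion applied to $T$ on the period annulus --- is aimed at the wrong object.

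The paper's actual route keeps the functional $Q$ itself and makes a different, essential move: after normalizing, the one-parameter family of homoclinic orbits (indexed by $k$, equivalently $\gamma$) is converted by the change of variables $z=x$, $\bar u=\sqrt{b(b-1)/(-B(x))}\,y$ into the family of level curves of a single first integral $H(z,\bar u)=A(z)/B(z)-\bar u^{2}=h$ with $h=1/\gamma\in(1,\infty)$. Then $Q=h^{1/2}\int_{\Gamma_h}g(z)\,d\bar u$ and the required sign of $dQ/dh$ is extracted by the same algebraic device used for period functions (multiplying by $h^{1/2}$, substituting the first integral to split the integral, differentiating in $h$, and checking convergence of the resulting improper integrals), which reduces everything to two pointwise inequalities (H1), (H2) on $(0,1)$. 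These are then proved not by sums of squares or case-splitting in $b$, but by an eliminant argument: one shows $R(z)=0$ and $R'(z)=0$ (and likewise for $P$) have no common root for $z\in(0,1)$, $b>1$, after substituting $\nu=(1-z)^{b-1}$, and combines this with a continuity-in-$b$ argument anchored at an explicit value of $b$. So the analogy with period functions in the paper is methodological, not a literal identification, and your proposal is missing both the change of variables that makes the parameter $k$ into a level value and the concrete mechanism for the final uniform-in-$b$ inequality.
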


The paper is organized as follows: preparations and the process of transformation of the $b$-CH equation \eqref{CH} to verify the stability criterion analytically for any $b>1$ are put in Section 2. In the Section 3, we present a consequence to deal with the stability criterion and Section 4 is devoted to the proof of the main result.

\section{Stability criterion and transformation of the b-CH equation}\label{sect-2}
In this section, we need the following preparations and lemmas that will be used throughout this paper, and we will show the transformation of the smooth solitary waves (the homoclinic orbits) into the first integral of planar Hamiltonian system.

If we plug $u(t,x)=\phi(x-ct)$ back into \eqref{CH}, then the $b$-CH equation \eqref{CH} becomes the following third-order differntial equation
\begin{equation}\label{ode3}
-(c-\phi)(\phi'''-\phi')+b\phi'(\phi''-\phi)=0.
\end{equation}
where $\phi:=\phi(x)$.
Integrating in $x$ yield the second-order equation:
\begin{equation}\label{ode4}
(c-\phi)(\phi-\phi'')+\frac{1}{2}(b-1)(\phi'^{2}-\phi^2)=ck-\frac{1}{2}(b+1)k^2.
\end{equation}

The following lemma summarizes the existence of smooth solitary waves for the $b$-CH equation \eqref{CH}.

\begin{lemma}\label{le1}{\rm(see \cite{Lafortune})}
For fixed $b>1$ and $c>0$, there exists a one-parameter family of smooth
solitary waves with profile $\phi\in C^{\infty}(\mathbb{R})$ satisfying $\phi'(0)=0$ and $\phi(x)\to k$ as $|x|\to\infty $ if
and only if the arbitrary parameter $k$ belongs to the interval $(0,\frac{c}{b+1})$. Moreover,
\begin{equation}\label{1}
0<\phi(x)<c, \ \ x\in\mathbb{R},
\end{equation}
and the family is smooth with respect to parameter $k$ in $(0,\frac{c}{b+1})$.
\end{lemma}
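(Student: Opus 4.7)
The plan is to reduce the second-order equation \eqref{ode4} to a planar first-order system via $v=\phi'$, extract an explicit first integral by an integrating factor, and then read off the existence, shape, and parameter dependence of the solitary-wave profile from its level sets. Writing $\phi''=\tfrac12\,d(v^2)/d\phi$ turns \eqref{ode4} into a linear ODE in $v^2$ as a function of $\phi$:
$$\frac{d(v^2)}{d\phi}-\frac{b-1}{c-\phi}\,v^2 = 2\phi-\frac{(b-1)\phi^2+2A}{c-\phi},\qquad A:=ck-\tfrac{1}{2}(b+1)k^2.$$
Multiplying by the integrating factor $(c-\phi)^{b-1}$ (this is where $b>1$ enters) and integrating once, then fixing the constant by the asymptotic condition $(\phi,v)\to(k,0)$, produces
$$v^2 = G(\phi;k) := \phi^2 + \frac{2A}{b-1} - \frac{2k(c-k)^b}{(b-1)(c-\phi)^{b-1}}.$$
The candidate homoclinic lies on $\{v^2=G(\phi)\}\cap\{\phi<c\}$.

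A direct calculation gives $G(k)=G'(k)=0$ and $G''(k)=2(c-(b+1)k)/(c-k)$, so $(k,0)$ is always a critical point of $G$, and it is a nondegenerate local minimum (equivalently, a hyperbolic saddle for the planar system) precisely when $c-(b+1)k>0$, which forces $k<c/(b+1)$. The critical equation $G'(\phi)=0$ reduces to $\phi(c-\phi)^b=k(c-k)^b$; since the map $\phi\mapsto\phi(c-\phi)^b$ is strictly increasing on $(-\infty,c/(b+1))$ and strictly decreasing on $(c/(b+1),c)$, one can classify the critical points of $G$ on $(-\infty,c)$ explicitly. In particular, for $k<0$ the only critical point is $\phi=k$ and is a global minimum with $G(k)=0$, so $G\ge0$ and the level set $\{v^2=G(\phi)\}$ degenerates to the point $(k,0)$; for $k=0$ the first integral collapses to $v^2=\phi^2$, whose only orbit through the origin is the non-smooth peakon $\phi\propto e^{-|x|}$. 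Together with the saddle condition this proves the necessity of $k\in(0,c/(b+1))$.

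For sufficiency, fix $k\in(0,c/(b+1))$. The critical-point analysis above shows $G$ has exactly two critical points in $(-\infty,c)$: a local minimum at $\phi=k$ with $G(k)=0$ and a local maximum at some $\phi_*\in(c/(b+1),c)$ with $G(\phi_*)>0$. Combined with $G(\phi)\to-\infty$ as $\phi\to c^-$, this yields a unique zero $M=M(k)\in(\phi_*,c)$ with $G>0$ on $(k,M)$ and $G'(M)<0$. The set $\{v=\pm\sqrt{G(\phi)}:\phi\in[k,M]\}\cup\{(k,0)\}$ is a homoclinic loop, and inverting $dx/d\phi=\pm G(\phi)^{-1/2}$ on each branch produces a smooth solitary-wave profile, symmetric about $x=0$, with $\phi(0)=M$ and $\phi'(0)=0$, satisfying $0<k\le\phi\le M<c$, which gives \eqref{1}. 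Smoothness of the family in $k$ follows from the implicit function theorem applied to $G(M;k)=0$ at the transverse zero $M$, combined with smooth dependence of ODE solutions on parameters.

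The main technical step is the explicit critical-point analysis of $G$ and the attendant case distinction on $k$; only after this is in hand can $M(k)$ be defined unambiguously and shown to depend smoothly on $k$. The remaining arguments are routine phase-plane analysis and standard consequences of the implicit function theorem.
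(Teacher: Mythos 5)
The paper offers no proof of this lemma: it is imported verbatim from Lafortune--Pelinovsky \cite{Lafortune}, so there is no internal argument to compare against line by line. Your route --- writing $\phi''=\tfrac12\,d(v^2)/d\phi$, solving the resulting linear equation for $v^2$ with the integrating factor $(c-\phi)^{b-1}$, and reading the solitary wave off the level set $v^2=G(\phi;k)$ --- is the standard phase-plane argument, and it is the same mechanism the paper itself relies on later (the first integral \eqref{H1} is the normalized version of your $G$). I checked the computations: the formula for $G$, the identities $G(k)=G'(k)=0$ and $G''(k)=2(c-(b+1)k)/(c-k)$, the critical equation $\phi(c-\phi)^b=k(c-k)^b$ together with the monotonicity of $\phi\mapsto\phi(c-\phi)^b$, and the sufficiency construction for $k\in(0,\tfrac{c}{b+1})$ are all correct; they deliver existence, the bounds $0<k\le\phi\le M<c$ giving \eqref{1}, and smoothness in $k$ via the implicit function theorem at the transverse zero $M$.

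Two spots in the necessity direction need repair. First, for $k<0$ your claim that the level set $\{v^2=G(\phi)\}$ ``degenerates to the point $(k,0)$'' is false: $(k,0)$ is a nondegenerate \emph{minimum} of $G$ with $G(k)=0$, and since it is the only critical point and $G\to+\infty$ at both ends of $(-\infty,c)$, one has $G>0$ on $(-\infty,c)\setminus\{k\}$, so the level set is a genuine curve with four separatrix branches through $(k,0)$. The correct reason there is no solitary wave is that $G$ has no further zero, so these branches never reconnect: the right-hand ones reach the singular line $\phi=c$ in finite $x$ (as $G\to+\infty$ there) and the left-hand ones escape to $-\infty$; hence no homoclinic loop. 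Second, at the endpoint $k=\tfrac{c}{b+1}$ the equilibrium is degenerate ($G''(k)=0$, $G'''(k)<0$), and ``not a nondegenerate minimum'' does not by itself exclude a degenerate, algebraically decaying homoclinic. You need the extra observation that there $G'(\phi)=2(c-\phi)^{-b}\bigl(\phi(c-\phi)^b-k(c-k)^b\bigr)<0$ for all $\phi\ne k$ in $(-\infty,c)$, because $k(c-k)^b$ is the maximum of $\phi\mapsto\phi(c-\phi)^b$; hence $G>0$ only for $\phi<k$ and the unique outgoing branch escapes to $-\infty$. With these two local fixes the argument is complete.
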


Let us now give the definition of orbital stability of the smooth solitary waves for the $b$-CH equation \eqref{CH}.

According to the analysis of \cite{Lafortune}, the $b$-CH equation \eqref{CH} takes the form by using the momentum density $m:=u-u_{xx}$
\begin{equation}\label{m}
m_t+um_x+bmu_x=0,
\end{equation}
where $m\in X_k$, $X_k=\{m-k\in H^1(\mathbb{R}): \ m(x)>0, x\in\mathbb{R}\}$, and $H^1(\mathbb{R})$ is based on the Sobolev space on $L^2(\mathbb{R})$.

\begin{definition}\label{de1}
Let $m(t,x)=\mu(x-ct)$ be the travelling wave solution of the b-CH equation \eqref{m} with $\mu\in X_{k}$. We say that the travelling wave is orbitally stable in $X_{k}$ if for every $\varepsilon>0$ there exists $\delta>0$ such that for every $m_0\in X_{k}$ satisfying $\|m_0-\mu\|_{H^1}<\delta$, there exists a unique solution $m\in C^0(\mathbb{R},X_k)$ of the $b$-CH equation \eqref{m} with the initial datum $m(0,\cdot)=m_0$ satisfying
$$\inf\limits_{x_0\in\mathbb{R}}\|m(t,\cdot)-\mu(\cdot-x_0)\|_{H^1}<\varepsilon, \ t\in\mathbb{R}.$$
\end{definition}

The following lemma gives the stability criterion of smooth solitary waves for the $b$-CH equations \eqref{CH}.

\begin{lemma}\label{le2}{\rm(see \cite{Lafortune})}
For fixed $b>1$, $c>0$, and $k\in(0,\frac{c}{b+1})$, there exists a unique solitary
wave $m(t,x)=\mu(x-ct)$ of the $b$-CH equation \eqref{m} with profile $\mu\in C^{\infty}(\mathbb{R})$ satisfying
$\mu(x)>0$ for $x\in\mathbb{R}$, $\mu'(0)=0$, and $\mu(x)\to k$ as $|x|\to \infty$ exponentially fast. The solitary
wave is orbitally stable in $X_k$ if the mapping
\begin{equation}\label{Q}
k\mapsto Q(\phi):=\int_{\mathbb{R}}\Big(b(\frac{c-k}{c-\phi})-(\frac{c-k}{c-\phi})^{b}-b+1\Big)dx
\end{equation}
is strictly increasing, where $\phi:=k+(1-\partial^2_x)^{-1}(\mu-k)$ is uniquely defined.
\end{lemma}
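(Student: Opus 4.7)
The plan is to decompose the statement into two parts: the existence and uniqueness of the momentum profile $\mu$ with the asserted properties, and the conditional orbital stability in $X_k$ via a Grillakis--Shatah--Strauss (GSS) style argument.

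For the first part, I would invoke Lemma \ref{le1} to obtain the smooth velocity profile $\phi$ and set $\mu:=\phi-\phi''$. Substituting the travelling-wave ansatz $u=\phi$, $m=\mu$ into \eqref{m} gives $(\phi-c)\mu'+b\phi'\mu=0$, which integrates, using the boundary condition $\phi,\mu\to k$ at infinity, to the explicit closed form
\[
\mu(x)=k\left(\frac{c-k}{c-\phi(x)}\right)^{\!b}.
\]
The strict bound $0<\phi<c$ in \eqref{1} then yields $\mu>0$ on $\mathbb{R}$; the normalization $\mu'(0)=0$ follows from $\phi'(0)=0$; and exponential decay $\mu\to k$ at infinity is inherited from the exponential decay of $\phi-k$. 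Uniqueness of $\mu$, and the reconstruction $\phi=k+(1-\partial_x^2)^{-1}(\mu-k)$, follow from the invertibility of $1-\partial_x^2$ in the appropriate Sobolev scale.

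For the stability criterion, the strategy is to identify two functionals $F,G:X_k\to\mathbb{R}$ conserved by the flow \eqref{m} such that $\mu$ is a critical point of $F-cG$, with $c$ playing the role of a Lagrange multiplier. Multiplying \eqref{m} by $m^{(1-b)/b}$ shows that $G(m):=\int_{\mathbb{R}}(m^{1/b}-k^{1/b})\,dx$ is conserved, and a weighted $H^1$-type energy $F$ must be constructed so that the constrained Euler--Lagrange equation reproduces \eqref{ode4}. Evaluating the Lagrangian along the solitary wave family parametrized by $k$ gives the scalar $d(k):=F(\mu_k)-cG(\mu_k)$; a direct computation using the algebraic identity $\mu/k=((c-k)/(c-\phi))^{b}$ should identify $d'(k)$, modulo an affine function of $k$ absorbed in the reference level, with a constant multiple of $Q(\phi)$ from \eqref{Q}. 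Consequently, the hypothesis $\tfrac{d}{dk}Q>0$ becomes exactly the Vakhitov--Kolokolov type slope condition $d''(k)>0$.

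The final step is the spectral analysis of the Hessian $L:=F''(\mu)-cG''(\mu)$ on $H^1(\mathbb{R})$: one must show that $L$ admits exactly one simple negative eigenvalue, a simple zero eigenvalue spanned by the translation mode $\mu'$, and positive essential spectrum bounded away from zero (the latter thanks to the asymptotic value $\mu\to k>0$); Sturm--Liouville theory applied to $L$ together with the explicit form of $\mu$ should deliver this spectral picture. Coercivity of $L$ on the codimension-two subspace transverse to translations and to $\partial_k\mu$ then follows from $d''(k)>0$, and the standard GSS Lyapunov argument yields orbital stability in $X_k$. The main obstacle I anticipate is the construction of $F$ valid for \emph{every} $b>1$: since \eqref{m} is not canonically Hamiltonian outside $b\in\{2,3\}$, $F$ must be built in momentum variables in a manner compatible with the nonlocal relation between $\mu$ and $\phi$, and matching $d'(k)$ with the functional $Q$ appearing in \eqref{Q} requires a careful and non-routine interchange of differentiation and spatial integration.
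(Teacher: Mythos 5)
This lemma is quoted in the paper directly from \cite{Lafortune} and is given no proof there, so there is no internal argument to compare against; you are in effect reproving the main structural result of the cited reference. The first half of your proposal is correct and matches how that reference proceeds: the travelling-wave reduction of \eqref{m} gives $(\phi-c)\mu'+b\phi'\mu=0$, which integrates to $\mu=k\bigl(\tfrac{c-k}{c-\phi}\bigr)^{b}$, and positivity, the normalization $\mu'(0)=0$, and exponential decay all follow from Lemma \ref{le1}. Note also that this identity explains the shape of \eqref{Q}: since $\tfrac{c-k}{c-\phi}=(\mu/k)^{1/b}$, the integrand of $Q$ is exactly $b k^{-1/b}\bigl(\mu^{1/b}-k^{1/b}\bigr)-k^{-1}(\mu-k)$, a combination of the two Casimir-type conserved densities of \eqref{m}.

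The second half has a genuine gap, and it is precisely the one you flag at the end but do not resolve. The Grillakis--Shatah--Strauss machinery presupposes a Hamiltonian formulation $m_t=J\,\delta E/\delta m$ with a conserved energy $F$ and a skew-adjoint $J$; for general $b>1$ (outside the integrable cases $b=2,3$) no such structure is known, so the functional $F$ whose constrained critical point is $\mu$, the identification of $d'(k)$ with $Q(\phi)$, and the pairing needed to convert $d''(k)>0$ into coercivity are not routine steps to be filled in later --- they are the entire content of the cited theorem. The proof in \cite{Lafortune} does not run through GSS spectral theory at all: it is a direct Lyapunov-type argument built from the conserved quantities $\int(m-k)\,dx$ and $\int(m^{1/b}-k^{1/b})\,dx$ (whose combination is $Q$) together with control of the evolution in characteristic coordinates. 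Two further unaddressed analytic points: $\int_{\mathbb{R}}(m^{1/b}-k^{1/b})\,dx$ need not converge for a general perturbation with only $m-k\in H^1(\mathbb{R})$, so the conservation laws must be renormalized relative to the wave before they can be used; and the asserted spectral picture for $L$ (exactly one simple negative eigenvalue, kernel spanned by $\mu'$) is stated as something Sturm--Liouville theory ``should deliver'' rather than verified, even though for a nonlocal quadratic form in the momentum variable this is not a standard second-order Sturm--Liouville problem. As written, the proposal is a plausible program, not a proof.
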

Thus orbital stability of smooth solitary waves for the $b$-CH equation \eqref{m} can be determined by the
sign of $\frac{dQ(\phi)}{dk}>0$.
Furthermore, the Lemma \ref{le2} implies that the travelling wave solution $u(t,x)=\phi(x-ct)$ of the $b$-CH equation \eqref{CH} is orbitally stable
in $Y_k$ where $Y_k=\{u-k\in H^3(\mathbb{R}): \ u(x)-u''(x)>0, \ x\in\mathbb{R}\}$.

Form \cite{Lafortune},  we obtain the normalized form of the second-order equation \eqref{ode4} after some transformations
\begin{equation}\label{ode2}
-\varphi''+\varphi(1-\varphi)^{b-2}\big(1-\frac{b+1}{2\gamma}\varphi\big)=0, \ \ \varphi\neq1
\end{equation}
where
$$
\zeta=\sqrt{c-k(b+1)}(c-k)^{\frac{b-2}{2}}z, \ \ \psi(z)=k+(c-k)\varphi(\zeta),
$$

$$
z=\int_0^x\frac{1}{(c-\phi(x))^{\frac{b-1}{2}}}dx, \ \ \phi(x)=\psi(z),
$$
and
\begin{equation}\label{r}
\gamma:=\frac{c-k(b+1)}{c-k}.
\end{equation}
Note that \eqref{r}, it is easy to verify that $\gamma\in(0,1)$ owing to $k\in(0,\frac{c}{b+1})$.

On account of $\frac{d\gamma}{dk}=\frac{-bc}{(c-k)^2}<0$ with $b>1, c>0$, the mapping \eqref{Q} is strictly increasing if and only if $\frac{dQ}{d\gamma}<0$, where
\begin{equation}\label{2}
\begin{split}
Q(\phi)&=\int_{\mathbb{R}}\Big(b(\frac{\phi-k}{c-\phi})+1-(\frac{c-k}{c-\phi})^{b}\Big)dx\\
&=\gamma^{-\frac{1}{2}}\int_{\mathbb{R}}\Big(b\varphi(1-\varphi)^{\frac{b-3}{2}}+(1-\varphi)^{\frac{b-1}{2}}
-(1-\varphi)^{-\frac{b+1}{2}}\Big)d\zeta.
\end{split}
\end{equation}

For convenience, denote $x:=\varphi$, $t:=\zeta$, then the equation \eqref{ode2} can be written as
\begin{equation}\label{ex}
-x''+x(1-x)^{b-2}\big(1-\frac{b+1}{2\gamma}x\big)=0,
\end{equation}
and let $x'=y$, we have the planar system as follows
\begin{equation}\label{ex1}
\left\{\begin{aligned}
  &\frac{dx}{dt}=y,\\
  &\frac{dy}{dt}=x(1-x)^{b-2}\big(1-\frac{b+1}{2\gamma}x\big),
     \end{aligned}
  \right.
\end{equation}
with the first integral
\begin{equation}\label{H1}
\bar{H}(x,y)=\frac{(1-x)^{b-1}}{\gamma b(b-1)}\big(2(1-\gamma)+2(1-\gamma)(b-1)x+b(b-1)x^2\big)-y^2=\bar{h}.
\end{equation}

It is clear that there are two singular point $(0,0)$ (the saddle point), $(\frac{2\gamma}{b+1},0)$ (the center), and a singular line $x=1$, where $\frac{2\gamma}{b+1}<1$.

\tikzset{
   flow/.style =
   {decoration = {markings, mark=at position #1 with {\arrow{>}}},
    postaction = {decorate}
   }}

\captionsetup{font={scriptsize}}

\tikzset{global scale/.style={
    scale=#1,
    every node/.append style={scale=#1}
  }
}

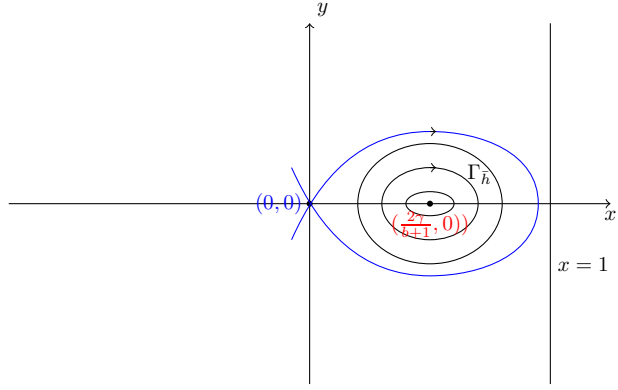
\begin{figure}[htb]
\centering
\begin{tikzpicture}[global scale=0.8]
\fill[black](0,0)circle(0.05);
\draw[->] (-5,0)--(5,0);
\draw(5,0)node[below]{$x$};
\draw[->](0,-3)--(0,3);
\draw(0,3)node[above right]{$y$};
%\draw[flow=0.4] (0,0)ellipse [x radius=2.5, y radius=1.5];
\draw (2,0)ellipse [x radius=0.4, y radius=0.2];
\draw (2,0)ellipse [x radius=0.8, y radius=0.6];
\draw[->](2,0.6)--(2.1,0.6);
\draw (2,0)ellipse [x radius=1.2, y radius=1];
\draw[blue] (-0.3,-0.6) to [out=66,in=180] (2,1.2) to [out=0,in=90] (3.8,0);
\draw[blue] (-0.3,0.6) to [out=-66,in=-180] (2,-1.2) to [out=-0,in=-90] (3.8,0);
\draw[->](2,1.2)--(2.1,1.2);
\draw (0,0)node[blue,left]{$(0,0)$};
\fill[black](2,0)circle(0.05);
\draw (2,0)node[red,below]{$(\frac{2\gamma}{b+1},0)$)};
\draw (4,-3)--(4,3);
\draw (4,-1) node [right]  {$x=1$};
%\draw(-3.16,0)node[below]{$e^-_1$};
%\fill[black](-3,16,0)circle(0.05);
%\draw(3.16,0)node[below right]{$e^+_1$};
%\fill[black](3.16,0)circle(0.05);
%\draw(0,-1.5)node[below left]{$e^-_2$};
%\fill[black](0,-2)circle(0.05);
%\draw(0,1.7)node[right]{$e^+_2$};
%\fill[black](0,2)circle(0.05);
%\draw (-1,1.6)node[red]{$\Gamma$};
\draw(2.5,0.2)node[above right]{$\Gamma_{\bar{h}}$};
\end{tikzpicture}
\caption{Diagram of the homoclinic orbit and the period annulus, where $\Gamma_{\bar{h}}=\{(x,y)|\bar{H}(x,y)=\bar{h},\bar{h}\in(\bar{h}_c,\bar{h}_s)\}$.}
\label{fig1}
\end{figure}

For $b > 1$ and $c > 0$, by the translational invariance, the smooth solitary waves (the homoclinic orbits) profile satisfying $\varphi'(0)=0$ and correspond to the level curve
\begin{equation}\label{orbit}
\frac{(1-x)^{b-1}}{\gamma b(b-1)}\big(2(1-\gamma)+2(1-\gamma)(b-1)x+b(b-1)x^2\big)-y^2=\frac{2(1-\gamma)}{\gamma b(b-1)}.
\end{equation}

Additionally, there exists a punctured neighbourhood of the center $(\frac{2\gamma}{b+1}, 0)$ enclosed by the homoclinic orbit connecting the saddle $(0, 0)$, and the largest such punctured neighborhood is said to be \textit{the period annulus} of $(\frac{2\gamma}{b+1}, 0)$ (see Figue \ref{fig1}).

Next, to better verify the stability criterion \eqref{2}, we consider a new system with the homoclinic orbits \eqref{orbit} as the first integral.

For the homoclinic orbits \eqref{orbit}, it can be written as
\begin{equation}\label{orbit1}
\frac{A(x)}{B(x)}-\frac{b(b-1)y^2}{-B(x)}=\frac{1}{\gamma},
\end{equation}
where
\begin{equation}\label{AB}
\begin{split}
&A(x)=2(1-x)^{b-1}+2(b-1)x(1-x)^{b-1}-2,\\
&B(x)=A(x)+b(b-1)x^2(1-x)^{b-1}.
\end{split}
\end{equation}

In order to separate variables of \eqref{orbit1}, we choose new variables $z,\bar{u}$ as follows:
$$
z=x, \ \  \bar{u}=\sqrt{\frac{b(b-1)}{-B(x)}}y.
$$
Then smooth solitary waves correspond to the level curve takes the form
\begin{equation}\label{H0}
\frac{A(z)}{B(z)}-\bar{u}^2=\frac{1}{\gamma}, \ \ \gamma\in(0,1).
\end{equation}

Note that the level curve \eqref{H0}, let $h=\frac{1}{\gamma}$, it is straightforward to show that
\begin{equation}\label{H}
H(z,\bar{u})=\frac{A(z)}{B(z)}-\bar{u}^2=h, \  h\in(1,\infty),
\end{equation}
is the first integral of the following new planar Hamiltonian system
\begin{equation}\label{ex3}
\left\{\begin{aligned}
  &\frac{dz}{d\tau}=2\bar{u},\\
  &\frac{d\bar{u}}{d\tau}=\frac{2b(b-1)z(1-z)^{b-2}}{B^2(z)}\Big(2-(b+1)z-2(1-z)^b-(b-1)z(1-z)^b\Big),
     \end{aligned}
  \right.
\end{equation}
where $z=1$ is the singular line and $d\tau=\frac{1}{2}\sqrt{\frac{-B(z)}{b(b-1)}}dt$.

The trajectories of smooth solitary waves correspond to the first integral of the system \eqref{ex3}. we denote by $\Gamma_h=\{(z,\bar{u})|H(z,\bar{u})=h, 1<h<\infty\}$ the trajectories of smooth solitary waves (see Figure \ref{fig2}).

\tikzset{
   flow/.style =
   {decoration = {markings, mark=at position #1 with {\arrow{>}}},
    postaction = {decorate}
   }}

\captionsetup{font={scriptsize}}

\tikzset{global scale/.style={
    scale=#1,
    every node/.append style={scale=#1}
  }
}

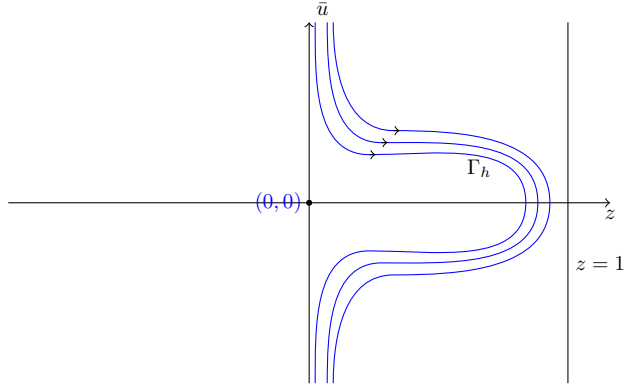
\begin{figure}[htb]
\centering
\begin{tikzpicture}[global scale=0.8]
\fill[black](0,0)circle(0.05);
\draw[->] (-5,0)--(5,0);
\draw(5,0)node[below]{$z$};
\draw[->](0,-3)--(0,3);
\draw(0,3)node[above right]{$\bar{u}$};
%\draw[flow=0.4] (0,0)ellipse [x radius=2.5, y radius=1.5];
%\draw (2,0)ellipse [x radius=0.4, y radius=0.2];
%\draw[->](2,0.6)--(2.1,0.6);
%\draw (2,0)ellipse [x radius=0.8, y radius=0.6];
\draw[blue] (0.1,3) to [out=-90,in=-180] (1,0.8) to [out=0,in=90] (3.6,0);
\draw[blue] (0.1,-3) to [out=90,in=-180] (1,-0.8) to [out=-0,in=-90] (3.6,0);
\draw[->](1,0.8)--(1.1,0.8);
\draw[blue] (0.3,3) to [out=-90,in=-180] (1.2,1) to [out=0,in=90] (3.8,0);
\draw[blue] (0.3,-3) to [out=90,in=-180] (1.2,-1) to [out=-0,in=-90] (3.8,0);
\draw[->](1.2,1)--(1.3,1);
\draw[blue] (0.4,3) to [out=-90,in=-180] (1.4,1.2) to [out=0,in=90] (4,0);
\draw[blue] (0.4,-3) to [out=90,in=-180] (1.4,-1.2) to [out=-0,in=-90] (4,0);
\draw[->](1.4,1.2)--(1.5,1.2);
\draw (0,0)node[blue,left]{$(0,0)$};
%\fill[black](2,0)circle(0.05);
%\draw (2,0)node[red,below]{$(\frac{2\gamma}{b+1},0)$)};
\draw (4.3,-3)--(4.3,3);
\draw (4.3,-1) node [right]  {$z=1$};
%\draw(-3.16,0)node[below]{$e^-_1$};
%\fill[black](-3,16,0)circle(0.05);
%\draw(3.16,0)node[below right]{$e^+_1$};
%\fill[black](3.16,0)circle(0.05);
%\draw(0,-1.5)node[below left]{$e^-_2$};
%\fill[black](0,-2)circle(0.05);
%\draw(0,1.7)node[right]{$e^+_2$};
%\fill[black](0,2)circle(0.05);
%\draw (-1,1.6)node[red]{$\Gamma$};
\draw(2.5,0.3)node[above right]{$\Gamma_h$};
\end{tikzpicture}
\caption{Diagram of the smooth solitary waves correspond to the level curve $\Gamma_{h}$}
\label{fig2}
\end{figure}

Using \eqref{AB}, in view of $z\in(0,1)$ and $b>1$, it is easy to check that
\begin{equation}\label{8}
\begin{split}
& A'(z)=-2b(b-1)z(1-z)^{b-2}<0,\\
& B'(z)=-b(b-1)(b+1)z^2(1-z)^{b-2}<0,\\
&A''(z)=2b(b-1)(1-z)^{b-3}\big((b-1)z-1\big),\\
&B''(z)=b(b-1)(b+2)z(1-z)^{b-3}(bz-2),
\end{split}
\end{equation}
and $A(z)<0$, $B(z)<0$  thanks to $A(0)=0, B(0)=0$.

For the system \eqref{ex3}, denote $f(z)=2-2(1-z)^b-(b+1)z-(b-1)z(1-z)^b$, it can easily be proved that
\begin{equation}\label{f}
f'(z)=(b+1)\big((1-z)^{b-1}+(b-1)z(1-z)^{b-1}-1\big)=\frac{b+1}{2}A(z).
\end{equation}
Since $z\in(0,1)$, we obtain $f'(z)<0$ and $f(z)<0$ due to $f(0)=0$. That is, the system \eqref{ex3} has no singular point with $z\in(0,1)$.

Moreover, by \eqref{2}, it follows that
\begin{equation}
\begin{split}
Q(\phi)=&h^{\frac{1}{2}}\int_{\mathbb{R}}\Big(bz(1-z)^{\frac{b-3}{2}}+(1-z)^{\frac{b-1}{2}}
-(1-z)^{-\frac{b+1}{2}}\Big)dt\\
=&h^{\frac{1}{2}}\int_{\mathbb{R}}\frac{(1-z)^{-\frac{b+1}{2}}}{2}A(z)dt\\
=&h^{\frac{1}{2}}\int_{\Gamma_h}\frac{A(z)(-B)^{\frac{3}{2}}(z)}
{2b^{\frac{1}{2}}(b-1)^{\frac{1}{2}}z(1-z)^{\frac{3b-3}{2}}f(z)}d\bar{u},
\end{split}
\end{equation}
owing to \eqref{ex3}. For simplicity, we denote $A(z)$, $B(z)$ and $f(z)$ as $A$, $B$ and $f$, respectively. That is
\begin{equation}\label{3}
Q(\phi)=h^{\frac{1}{2}}\int_{\Gamma_h}\frac{A(-B)^{\frac{3}{2}}}
{2b^{\frac{1}{2}}(b-1)^{\frac{1}{2}}z(1-z)^{\frac{3b-3}{2}}f}d\bar{u},
\end{equation}
namely $Q$ function.

In view of $\frac{dh}{d\gamma}=-\frac{1}{\gamma^2}<0$, the mapping $Q(\phi)$ \eqref{Q} is strictly increasing if and only if
$\frac{dQ}{dh}>0$.

\section{Preparations of analysis of the stability criterion}\label{sect-3}
To address the monotonicity of the $Q$ function \eqref{3}, we need the following preparations.

As stated in the previous section, we only need to verify that $\frac{dQ}{dh}>0$ for the planar Hamiltonian system \eqref{ex3}.

We find that the $Q$ function is very similar to the \textit{period function} (the period function assigns to each orbit in the period annulus its period) of the center of those planar differential systems for which the first integral $H(x,y)$ has separable variables, i.e., $H(x,y)=F_1(x)+F_2(y)$ (see Figure \ref{fig1}).

In the literatures, much attention is paid to the centers and the monotonicity of period functions of the planar quadratic polynomial systems, see for example \cite{Chicone2,Coppel,Garijo,Gasull,Li1,Zhao} and reference therein.

In \cite{Jordi}, Villadelprat and Zhang considered the monotonicity of the period function of planar Hamiltonian differential systems with the first integral $H(x,y)=F_1(x)+F_2(y)$, where the period function can be written as $$T(\bar{h})=\int_{\Gamma_{\bar{h}}}dt=\int_{\Gamma_{\bar{h}}}\frac{1}{F'_1(y)}dx.$$
Later, the monotonicity of the period function as follows
$$T(\bar{h})=\int_{\Gamma_{\bar{h}}}\frac{g(x)}{l(y)}dx,$$
with the first integral $H(x,y)=F_1(x)+F_2(y)$ was studied by our previous work in \cite{Long}. To solve the convergence problem, we multiply the period function $T(\bar{h})$ by $\bar{h}$ and take the derivative of $\bar{h}T(\bar{h})$ with respect to $\bar{h}$.

In the following proof, we shall adopt the same procedure as in the proof of the monotonicity of the period function of planar Hamiltonian differential systems.

Similarly, for the $Q$ function, denote
$$Q(\phi)=h^{\frac{1}{2}}\int_{\Gamma_h}\frac{A(-B)^{\frac{3}{2}}}
{2b^{\frac{1}{2}}(b-1)^{\frac{1}{2}}z(1-z)^{\frac{3b-3}{2}}f}d\bar{u}\triangleq h^{\frac{1}{2}}\int_{\Gamma_h}g(z)d\bar{u},$$
and we present a consequence to deal with the $Q$ function as follows, which will be applied to prove the main result in the next section.

\begin{lemma}\label{le3}
Assume for $b>1$, the following two hypotheses hold:
\begin{itemize}
  \item [{\rm(H1)}]
      $2(1-z)A'f+(b-1)Af-(1-z)Af'>0$, \  $z\in(0,1)$,
  \item [{\rm(H2)}] $\frac{1}{2}z(1-z)B'+\frac{1}{2}(b-1)zB-(1-z)B<0$, \  $z\in(0,1)$.
\end{itemize}
Then the mapping $Q(\phi)$ \eqref{Q} is strictly increasing with respect to $k$.
\end{lemma}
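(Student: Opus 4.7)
The plan is to reduce strict monotonicity of the map $k \mapsto Q(\phi)$ to the pointwise inequality $\frac{dQ}{dh}>0$ (as observed at the end of Section 2), and then to establish the latter by transforming the line integral so that differentiation in $h$ yields an integrand governed precisely by (H1) and (H2). The first move is to rewrite $\int_{\Gamma_h}g(z)\,d\bar u$ with $h$-independent bounds. Using the symmetry $\bar u\leftrightarrow-\bar u$ of $H(z,\bar u)=\psi(z)-\bar u^2$ with $\psi:=A/B$, together with the orientation of $\Gamma_h$ shown in Figure \ref{fig2},
$$Q(\phi)=-2h^{1/2}\int_0^{\infty}g(z(\bar u))\,d\bar u,$$
where $z(\bar u)\in(0,z_h]$ is defined by $\psi(z)=h+\bar u^2$. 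The rescaling $\bar u=\sqrt h\,v$ then yields
$$Q(\phi)=-2h\int_0^{\infty}g(z(v,h))\,dv,\qquad \psi(z(v,h))=h(1+v^2),$$
whose $v$-range no longer depends on $h$.

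I would then differentiate under the integral sign. Implicit differentiation of $\psi(z)=h(1+v^2)$ gives $\partial_h z=\psi/(h\psi')$, and a short computation produces
$$\frac{dQ}{dh}=-2\int_0^{\infty}\frac{(g\psi)'(z)}{\psi'(z)}\,dv.$$
From the identity $A'B-AB'=2b(b-1)z(1-z)^{b-2}f$ (which reflects the Hamiltonian structure of \eqref{ex3} and is easily checked from \eqref{AB} and \eqref{f}), $\psi'(z)=2b(b-1)z(1-z)^{b-2}f(z)/B(z)^2$ is negative on $(0,1)$ since $f<0$ there. Hence $\frac{dQ}{dh}>0$ reduces to the pointwise inequality $(g\psi)'(z)>0$ on $(0,1)$, an $h$-free statement.

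To verify this inequality I would compute the logarithmic derivative
$$\frac{(g\psi)'}{g\psi}=\frac{2A'}{A}+\frac{B'}{2B}-\frac{1}{z}+\frac{3(b-1)}{2(1-z)}-\frac{f'}{f},$$
multiply both sides by the positive quantity $z(1-z)Af$ (positive because $A,f<0$ force $Af>0$), and regroup the five resulting terms as
$$z\bigl[\,2(1-z)A'f+(b-1)Af-(1-z)Af'\,\bigr]+Af\Bigl[\,\tfrac{z(1-z)B'}{2B}+\tfrac{(b-1)z}{2}-(1-z)\,\Bigr].$$
The first bracket is exactly (H1), hence strictly positive; the second bracket, when multiplied by $2B<0$, becomes $z(1-z)B'+(b-1)zB-2(1-z)B$, which is negative by (H2), so the bracket itself is positive, and the prefactor $Af>0$ preserves the sign. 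Summing gives $(g\psi)'(z)>0$ on $(0,1)$, whence $\frac{dQ}{dh}>0$ and the conclusion.

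The main obstacle I anticipate is pinpointing the regrouping above: the specific pairing of terms from $(g\psi)'/(g\psi)$ that matches (H1) with weight $z$ and (H2) with weight $Af$ is the only combination that makes both hypotheses pull their weight simultaneously. The rescaling $\bar u=\sqrt h\,v$ is what makes this possible: by removing the $h$-dependence of the integration domain, it isolates a pure $z$-integrand after differentiation, so that hypotheses formulated purely in terms of $z$ can act at the integrand level. Without this rescaling, one would be forced to track the moving singular endpoint $z_h(h)$ and extra boundary corrections that would obscure the clean factorization through (H1)--(H2).
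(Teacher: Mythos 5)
Your argument is correct and is essentially the paper's own proof in different clothing. The paper multiplies $Q$ by $h^{1/2}$, substitutes $h=\frac{A}{B}-\bar u^{2}$ to split $h^{1/2}Q=I_1-I_2$, and shows that $I_2'$ cancels against $\frac{1}{2}h^{-1/2}Q$, leaving $h^{1/2}\frac{dQ}{dh}=I_1'(h)=-2\int_0^{\infty}\frac{(g\psi)'}{\psi'}\,d\bar u$ with $\psi=A/B$ --- exactly the quantity your rescaling $\bar u=\sqrt h\,v$ produces in one step --- and the two displayed integrals in \eqref{7} are precisely your (H1)/(H2) regrouping of $(g\psi)'$, so the two computations agree term by term. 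Two small points you elide that the paper does address: to pass from $(g\psi)'/(g\psi)>0$ to $(g\psi)'>0$ you need $g\psi>0$ (true, since $A<0$, $B<0$, $f<0$ on $(0,1)$), and the differentiation under the integral sign over the unbounded range requires an integrability check as $\bar u\to+\infty$ (i.e.\ $z\to0^{+}$), where $z\sim \mathrm{const}\cdot\bar u^{-2}$ and $(g\psi)'/\psi'=O(z^{5/2})=O(\bar u^{-5})$; the paper carries this out via Taylor expansion of the integrands at $z=0$.
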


\begin{proof}
It is sufficient to verify that $\frac{dQ}{dh}>0$ for the planar Hamiltonian system \eqref{ex3}.

For the $Q(\phi)=h^{\frac{1}{2}}\int_{\Gamma_h}g(z)d\bar{u}$,
multiplying both sides of the above equation by $h^{\frac{1}{2}}$, we obtain
\begin{equation}\label{Q1}
h^{\frac{1}{2}}Q(\phi)=h\int_{\Gamma_h}g(z)d\bar{u},
\end{equation}
and
\begin{equation}\label{5}
\begin{split}
h^{\frac{1}{2}}Q(\phi)=&\int_{\Gamma_h}\big(\frac{A}{B}(z)-\bar{u}^2\big)g(z)d\bar{u}\\
=&\int_{\Gamma_h}\big(\frac{A}{B}g\big)(z)d\bar{u}
-\int_{\Gamma_h}g(z)\bar{u}^2d\bar{u}\\
\triangleq&I_1(h)-I_2(h).
\end{split}
\end{equation}
by virtue of the first integral \eqref{H}.

Taking the derivative with respect to $h$ on both sides of the above equality, we have
\begin{equation}\label{6}
\frac{1}{2}h^{-\frac{1}{2}}Q(\phi)+h^{\frac{1}{2}}\frac{dQ(\phi)}{dh}=I'_1(h)-I'_2(h),
\end{equation}
where
\begin{equation}
\begin{split}
I'_1(h)=&\int_{\Gamma_h}\frac{1}{2b^{\frac{1}{2}}(b-1)^{\frac{1}{2}}}\cdot\Big(\frac{A^2}{(1-z)^{b-1}f}\Big)'
\frac{\partial z}{\partial h}\cdot\frac{-(-B)^{\frac{1}{2}}}{z(1-z)^{\frac{1}{2}(b-1)}}d\bar{u}\\
&+\int_{\Gamma_h}\frac{1}{2b^{\frac{1}{2}}(b-1)^{\frac{1}{2}}}\cdot\frac{A^2}{(1-z)^{b-1}f}
\cdot\Big(\frac{-(-B)^{\frac{1}{2}}}{z(1-z)^{\frac{1}{2}(b-1)}}\Big)'\frac{\partial z}{\partial h}d\bar{u}\\
=&\int_{\Gamma_h}\frac{-A(-B)^{\frac{5}{2}}\big(2(1-z)A'f+(b-1)Af-(1-z)Af'\big)}
{4b^{\frac{3}{2}}(b-1)^{\frac{3}{2}}z^2(1-z)^{\frac{5b-5}{2}}f^3}d\bar{u}\\
&+\int_{\Gamma_h}\frac{A^2(-B)^{\frac{3}{2}}\big(z(1-z)B'+\frac{1}{2}(b-1)zB-(1-z)B\big)}
{4b^{\frac{3}{2}}(b-1)^{\frac{3}{2}}z^3(1-z)^{\frac{5b-5}{2}}f^2}d\bar{u},
\end{split}
\end{equation}
and
\begin{equation}
\begin{split}
I'_2(h)=&\Big(\int_{\Gamma_h}\frac{b^{\frac{1}{2}}(b-1)^{\frac{1}{2}}A}{2(1-z)^{\frac{b+1}{2}}\sqrt{-B}}\bar{u}dz\Big)'
=\int_{\Gamma_h}\frac{b^{\frac{1}{2}}(b-1)^{\frac{1}{2}}A}{2(1-z)^{\frac{b+1}{2}}\sqrt{-B}}\cdot\frac{\partial \bar{u}}{\partial h}dz\\
=&\int_{\Gamma_h}-\frac{b^{\frac{1}{2}}(b-1)^{\frac{1}{2}}A}{2(1-z)^{\frac{b+1}{2}}\sqrt{-B}}\cdot\frac{1}{2\bar{u}}dz\\
=&-\int_{\Gamma_h}\frac{A(-B)^{\frac{3}{2}}}{4b^{\frac{1}{2}}(b-1)^{\frac{1}{2}}z(1-z)^{\frac{3b-3}{2}}f}d\bar{u},
\end{split}
\end{equation}
due to \eqref{H} and \eqref{ex3}.

In addition, we must verify that the integral $I'_1(h)$ and $I'_2(h)$ are well defined along the orbit $\Gamma_h$.
It is easy to verify that for $\bar{u}\to0$, the denominator of $I'_1(h)$ and $I'_2(h)$ is not $0$. That is the integral $I'_1(h)$ and $I'_2(h)$ are Riemann integrals, which are well-defined.

And it is simple to know that $\bar{u}\to+\infty$ is singularity of $I'_1(h)$ and $I'_2(h)$.
Using the fact of the first integral \eqref{H}, we have
$$\frac{z}{\frac{A(z)z}{B(z)}-hz}=\frac{1}{\bar{u}^2},$$
and $\bar{u}\to+\infty$,
$$
z=\frac{1}{\bar{u}^2}+O(\frac{1}{\bar{u}^2}),
$$
by means of Lagrange inversion theorem.

For
\begin{equation}\label{I}
-\int_{0}^{+\infty}\frac{-A(-B)^{\frac{5}{2}}\big(2(1-z)A'f+(b-1)Af-(1-z)Af'\big)}
{2b^{\frac{3}{2}}(b-1)^{\frac{3}{2}}z^2(1-z)^{\frac{5b-5}{2}}f^3}d\bar{u},
\end{equation}
the Taylor expansion of the numerator and the denominator of the above function \eqref{I} at the origin have the form
$$
-\int_{0}^{+\infty}\frac{\alpha_1z^{15}+o(z^{16})}{\alpha_2z^{\frac{25}{2}}+o(z^{\frac{27}{2}})}d\bar{u},
$$
where $\alpha_1, \alpha_2$ are parameters related to $b$. It follows that the integral \eqref{I} is absolutely convergent for $\bar{u}\to+\infty$ ($z\to0$), i.e., the integral \eqref{I} is well defined.

Similarly, we can also check that the power of $z$ in the numerator is higher than the denominator of $I'_1(h)$ and $I'_2(h)$ thanks to \eqref{AB} and \eqref{8}, that is integral $I'_1(h)$ and $I'_2(h)$ are well defined.

By \eqref{3}, it is simple to show that
$$\frac{1}{2}h^{-\frac{1}{2}}Q(\phi)=
\int_{\Gamma_h}\frac{A(-B)^{\frac{3}{2}}}{4b^{\frac{1}{2}}(b-1)^{\frac{1}{2}}z(1-z)^{\frac{3b-3}{2}}f}d\bar{u}=-I'_2(h).$$
It follows that
\begin{equation}\label{7}
\begin{split}
h^{\frac{1}{2}}\frac{dQ(\phi)}{dh}=I'_1(h)=&\int_{\Gamma_h}\frac{-A(-B)^{\frac{5}{2}}\big(2(1-z)A'f+(b-1)Af-(1-z)Af'\big)}
{4b^{\frac{3}{2}}(b-1)^{\frac{3}{2}}z^2(1-z)^{\frac{5b-5}{2}}f^3}d\bar{u}\\
&+\int_{\Gamma_h}\frac{A^2(-B)^{\frac{3}{2}}\big(z(1-z)B'+\frac{1}{2}(b-1)zB-(1-z)B\big)}
{4b^{\frac{3}{2}}(b-1)^{\frac{3}{2}}z^3(1-z)^{\frac{5b-5}{2}}f^2}d\bar{u}\\
=&-\int_{0}^{+\infty}\frac{-A(-B)^{\frac{5}{2}}\big(2(1-z)A'f+(b-1)Af-(1-z)Af'\big)}
{2b^{\frac{3}{2}}(b-1)^{\frac{3}{2}}z^2(1-z)^{\frac{5b-5}{2}}f^3}d\bar{u}\\
&-\int_{0}^{+\infty}\frac{A^2(-B)^{\frac{3}{2}}\big(\frac{1}{2}z(1-z)B'+\frac{1}{2}(b-1)zB-(1-z)B\big)}
{2b^{\frac{3}{2}}(b-1)^{\frac{3}{2}}z^3(1-z)^{\frac{5b-5}{2}}f^2}d\bar{u}.
\end{split}
\end{equation}

On account of
$$\frac{-A(-B)^{\frac{5}{2}}}{2b^{\frac{3}{2}}(b-1)^{\frac{3}{2}}z^2(1-z)^{\frac{5b-5}{2}}f^3}<0,$$
and
$$\frac{A^2(-B)^{\frac{3}{2}}}{2b^{\frac{3}{2}}(b-1)^{\frac{3}{2}}z^3(1-z)^{\frac{5b-5}{2}}f^2}>0,$$
and according to $(H1)$ and $(H2)$, we obtain $\frac{Q(\phi)}{dh}>0$. That is the mapping $Q(\phi)$ \eqref{Q} is strictly increasing with respect to $k$. The proof is completed.
\end{proof}

\section{The proof of the main result}\label{sect-4}
In this section, we shall show analytically for any $b>1$ that the stability criterion is satisfied and smooth solitary waves of the the $b$-CH equation \eqref{CH} are orbitally stable.

\begin{proof}

From Lemma \ref{le3}, it is enough to check $(H1)$ and $(H2)$.

$\bullet$ Firstly, we need to verify $(H1)$.

From \eqref{8}, it follows that
\begin{equation}
\begin{split}
&\frac{1}{2}z(1-z)B'+\frac{1}{2}(b-1)zB-(1-z)B\\
=&(1-z)^{b-1}\big((b-1)z^2+(3-b)z-2\big)-(b+1)z+2\\
\triangleq&R(z),
\end{split}
\end{equation}
and it can easily be checked $R(1)=-b+1<0$. The Taylor expansion of the function $R$ at the origin has the form
$$
R(z)=\frac{b}{6}(1-b)(1+b)z^3+o(z^3).
$$
Thus, we have $R'(0)=R''(0)=0$, $R^3(0)=\frac{b}{6}(1-b)(1+b)<0$ and $z\to0+$, $\frac{R(z)}{z^3}<0$.

To determine the sign of $R(z)$, assume that $b=3$, we obtain for $z\in(0,1)$,
$$R(z)=2z^3(z-2)<0.$$
Taking the derivative of $R(z)$ with respect to $z$, we have
\begin{equation}
R'(z)=(b+1)(1-z)^{b-1}\big((b-1)z+1\big)-(b+1).
\end{equation}

In order to prove the theorem, we assertion that if the two equations $R(z)=0$ and $R'(z)=0$ have no the common roots for $z\in(0,1)$, $b>1$, then $R(z)<0$ for $z\in(0,1)$, $b>1$.

If the assertion would not hold, then there exist a parameter $b_1\in(1,+\infty)$ such that $\frac{R(z)}{z^3}>0$. According to the continuous dependence of the solution on the parameters, we can find another parameter $b_0\in(1,+\infty)$ such that $R(z)=0$ and $R'(z)=0$ have one common root for $z\in(0,1)$ in light of $\frac{R(z)}{z^3}<0$ for $b=3$, $z\in(0,1)$. This leads to a contradiction.
It is now obvious that the assertion holds (see Figure \ref{fig3}).

\tikzset{
   flow/.style =
   {decoration = {markings, mark=at position #1 with {\arrow{>}}},
    postaction = {decorate}
   }}

\captionsetup{font={scriptsize}}

\tikzset{global scale/.style={
    scale=#1,
    every node/.append style={scale=#1}
  }
}

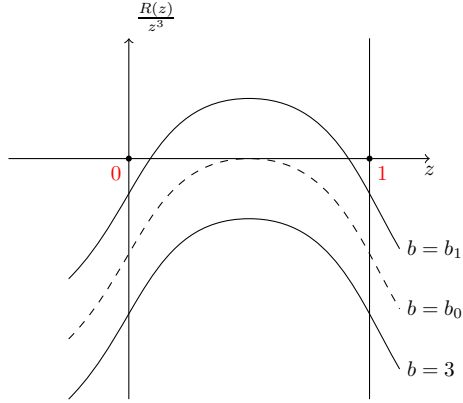
\begin{figure}[htb]
\centering
\begin{tikzpicture}[global scale=0.8]
\fill[black](0,0)circle(0.05);
\draw[->] (-2,0)--(5,0);
\draw(5,0)node[below]{$z$};
\draw[->](0,-4)--(0,2);
\draw(4,-4)--(4,2);
\draw(0,2)node[above right]{$\frac{R(z)}{z^3}$};
\fill[black] (4,0)circle(0.05);
\draw[red](0,0)node[below left]{$0$};
\draw[red](4,0)node[below right]{$1$};
\draw[black] (-1,-4) to [out=45,in=180] (2,-1) to [out=0,in=120] (4.5,-3.5);
\draw(4.5,-3.5)node[right]{$b=3$};
\draw[dashed] (-1,-3) to [out=45,in=180] (2,0) to [out=0,in=120] (4.5,-2.5);
\draw(4.5,-2.5)node[right]{$b=b_0$};
\draw[black] (-1,-2) to [out=45,in=180] (2,1) to [out=0,in=120] (4.5,-1.5);
\draw(4.5,-1.5)node[right]{$b=b_1$};
\end{tikzpicture}
\caption{Diagram of the function $\frac{R(z)}{z^3}$, $z\in(0,1)$, the dotted line corresponds to $R(z)=0$ and $R'(z)=0$ have one common root.}
\label{fig3}
\end{figure}

In other words, we only need to prove that $R(z)=0$, $R'(z)=0$ have no the common roots on $(0,1)$.

Substitute $\nu$ for $(1-z)^{b-1}$ in $R(z)$ and $(1-z)R'(z)$, we can get
$$R(z)=\nu\big((b-1)z^2+(3-b)z-2\big)-(b+1)z+2,$$
and
$$R'(z)=(b+1)\nu\big((b-1)z+1\big)-(b+1),$$
where $R(z)$ and $R'(z)$ are functions of $z, \nu$, and $z\in(0,1)$, $\nu\in(0,1)$.

With the help of elimination by eliminant, the common roots of $R(z)=0$ and $R'(z)=0$ satisfy the following equation
$$b(b+1)(b-1)z^2=0.$$

It is simple to check that for $b>1$ and $z\in(0,1)$, $b(b+1)(b-1)z^2>0$.
It follows that $R(z)=0$ and $R'(z)=0$ have no the common roots for $z\in(0,1)$.

As what we have hoped, that is for $b>1$, $z(1-z)B'+\frac{1}{2}(b-1)zB-(1-z)B<0$, $z\in(0,1)$.

$\bullet$ The next thing to do in the proof is to verify $(H2)$.

We now proceed as in the proof of $(H1)$. If we plug $\nu=(1-z)^{b-1}$ back into $2(1-z)A'f+(b-1)Af-(1-z)Af'$, it can easily be shown that
\begin{equation}
\begin{split}
&2(1-z)A'f+(b-1)Af-(1-z)Af'\\
=&\big(2(b-1)^2z^2-2(b^2-5b+2)z-6b+2\big)\nu^2\\
&+\big(2b(b-1)^2z^2-4(3b-1)z+12b-4\big)\nu+2b(b+1)z-6b+2\\
\triangleq&P(z),
\end{split}
\end{equation}
thanks to \eqref{8}, and it can easily be verified that $P(1)=2(b-1)^2>0$. The Taylor expansion of the function $P$ with respect to $z$ at the origin has the form
$$
P(z)=\frac{1}{6}b^2(b+1)(b-1)^2z^4+o(z^4).
$$
Thus, we have $P'(0)=P''(0)=0$, $P^3(0)=0$, $P^4(0)=\frac{1}{6}b^2(b+1)(b-1)^2>0$ and $z\to0+$, $\frac{P(z)}{z^4}>0$.

Supposed that $b=2$, it is evident that $P(z)=2z^4>0$ for $z\in(0,1)$.
Taking the derivative of $P(z)$ with respect to $z$, we get
\begin{equation}
\begin{split}
(1-z)P'(z)=&(1-z)\big((b-3)A'f+2(1-z)A''f+bAf'\big)\\
=&\big(-4b(b-1)^2z^2+2b(2b^2-9b+5)z+10b^2-6b\big)\nu^2\\
&+\big(-2b(b+1)(b-1)^2z^2+4b^2(b+1)z-12b^2+4b\big)\nu\\
&-2b(b+1)z+2b(b+1)\\
\end{split}
\end{equation}
due to \eqref{8}.

It follows that if for $b>1$ the two equations $P(z)=0$, $P'(z)=0$ have no the common roots, then $P(z)>0$, $z\in(0,1)$.

The common roots of $P(z)=0$ and $P'(z)=0$ satisfy the following equation
$$
z^4\big((b-1)^3z^2+(-12b+4)z+(12b-4)\big)\triangleq z^4l(z)=0.
$$
Now that $b>1$, it can easily be verified that
$$l(z)=(b-1)^3z^2+(12b-4)(1-z)>0, \  z\in(0,1),$$
i.e., $P(z)>0$ for $z\in(0,1)$, $b>1$.

Therefore, we obtain $2(1-z)A'f+(b-1)Af-(1-z)Af'>0$ for $z\in(0,1)$, $b>1$.

Based on the above analyses, the two hypotheses $(H1)$ and $(H2)$ hold.

Hence, by the Lemma \ref{le2}, for any $b>1$, $c>0$ and $k\in(0, \frac{c}{b+1})$, the stability criterion is verified analytically and the smooth solitary waves are orbitally stable. This completes the proof.
\end{proof}

\subsection*{Acknowledgements}
The paper is supported by the National Natural Science Foundation of China (No. 12171491).

\end{document}